\theoremstyle{plain}
\newtheorem{thm}{Theorem}[section]
\newtheorem{cor}{Corollary}[section]
\theoremstyle{remark}
\newtheorem{rem}{Remark}[section]
\numberwithin{equation}{section}
\newcommand{\td}{\textup{d}}
\DeclareMathOperator{\bell}{B}
\begin{document}

\title[An analytic generalization of the Catalan numbers]
{An analytic generalization of the Catalan numbers and its integral representation}

\author[W.-H. Li]{Wen-Hui Li}
\address[Li]{Department of Fundamental Courses, Zhengzhou University of Science and Technology, Zhengzhou 450064, Henan, China}
\email{\href{mailto: W.-H. Li <wen.hui.li102@gmail.com>}{wen.hui.li102@gmail.com}, \href{mailto: W.-H. Li <wen.hui.li@foxmail.com>}{wen.hui.li@foxmail.com}}
\urladdr{\url{https://orcid.org/0000-0002-1848-8855}}

\author[J. Cao]{Jian Cao}
\address[Cao]{Department of Mathematics, Hangzhou Normal University, Hangzhou 311121, Zhejiang, China}
\email{\href{mailto: J. Cao <21caojian@gmail.com>}{21caojian@gmail.com}, \href{mailto: J.Cao <21caojian@163.com>}{21caojian@163.com}}
\urladdr{\url{https://orcid.org/0000-0002-7173-0591}}

\author[D.-W. Niu]{Da-Wei Niu}
\address[Niu]{Department of Science, Henan University of Animal Husbandry and Economy, Zhengzhou 450046, Henan, China}
\email{\href{mailto: D.-W. Niu <nnddww@gmail.com>}{nnddww@gmail.com}, \href{mailto: D.-W. Niu <nnddww@163.com>}{nnddww@163.com}}
\urladdr{\url{https://orcid.org/0000-0003-4033-7911}}

\author[J.-L. Zhao]{Jiao-Lian Zhao}
\address[Zhao]{School of Mathematics and Statistics, Weinan Normal University, Weinan 714000, Shaanxi, China}
\email{\href{mailto: J.-L. Zhao <zhaojl2004@gmail.com>}{zhaojl2004@gmail.com}, \href{mailto: J.-L. Zhao <zhaojl@wnu.edu.cn>}{zhaojl@wnu.edu.cn}}
\urladdr{\url{https://orcid.org/0000-0003-3173-7075}}

\author[F. Qi]{Feng Qi*}
\address[Qi]{Institute of Mathematics, Henan Polytechnic University, Jiaozuo 454010, Henan, China; School of Mathematical Sciences, Tiangong University, Tianjin 300387, China}
\email{\href{mailto: F. Qi <qifeng618@gmail.com>}{qifeng618@gmail.com}, \href{mailto: F. Qi <qifeng618@hotmail.com>}{qifeng618@hotmail.com}, \href{mailto: F. Qi <qifeng618@qq.com>}{qifeng618@qq.com}}
\urladdr{\url{https://qifeng618.wordpress.com}, \url{https://orcid.org/0000-0001-6239-2968}}

\dedicatory{Dedicated to people facing and battling COVID-19}

\begin{abstract}
In the paper, the authors analytically generalize the Catalan numbers in combinatorial number theory, establish an integral representation of the analytic generalization of the Catalan numbers by virtue of Cauchy's integral formula in the theory of complex functions, and point out potential directions to further study.
\end{abstract}

\subjclass{Primary 05A15; Secondary 11B75, 11B83, 26A09, 30E20, 41A58}

\keywords{Catalan number; generalized Catalan function; generalized Catalan number; Cauchy's integral formula; generalization; generating function; integral representation}

\thanks{*Corresponding author}

\thanks{This paper was typeset using \AmS-\LaTeX}

\maketitle
\tableofcontents

\section{Backgrounds and motivations}

The Catalan numbers
\begin{equation}\label{Catalan-1Exp}
C_n=\frac{1}{n+1}\binom{2n}{n}=\frac{4^n\Gamma(n+1/2)}{\sqrt\pi\,\Gamma(n+2)}
\end{equation}
form a sequence of integers~\cite{Grimaldi-B2012, Koshy-B-2009, Spivey-art-2019}, have combinatorial interpretations~\cite{Koshy-B-2009, Roman-Catalan-B2015}, have a long history~\cite{Grimaldi-B2012, Catalan-Int-Surv.tex}, and can be generated~\cite{Roman-Catalan-B2015, Stanley-Catalan-2015} by
\begin{equation}\label{CatalanNo-Gen=F}
G(x)=\frac2{1+\sqrt{1-4x}\,}=\sum_{n=0}^\infty C_nx^n,
\end{equation}
where
\begin{equation*}
\Gamma(z)=\lim_{n\to\infty}\frac{n!n^z}{\prod_{k=0}^n(z+k)}, \quad z\ne0,-1,-2,\dotsc
\end{equation*}
is the classical Euler gamma function~\cite{NIST-HB-2010, Ext-Alzer-Ouimet-Qi.tex}.
\par
In~the electronic preprint~\cite{LCMF-Catalan-NS.tex} and its formally published version~\cite{195-2017-JOCAAA.tex}, starting from the second expression in terms of gamma functions in~\eqref{Catalan-1Exp}, the Catalan numbers $C_n$ were analytically generalized to generalized Catalan function
\begin{equation}\label{C(a,b;x)=dfn}
C(a,b;z)=\frac{\Gamma(b)}{\Gamma(a)} \biggl(\frac{b}a\biggr)^z \frac{\Gamma(z+a)}{\Gamma(z+b)}, \quad \Re(a),\Re(b)>0, \quad \Re(z)\ge0
\end{equation}
with
\begin{equation}\label{Cataln-Qi-Catalan-Rel}
C\biggl(\frac12,2;n\biggr)=C_n, \quad n\ge0.
\end{equation}
Hereafter, generalized Catalan function $C(a,b;z)$ and its analytic generalizations were deeply investigated in~\cite{Chammam-India-Catalan-Qi, LCMF-Catalan.tex, Mathematics-129120.tex, Dana-Picard-IJMEST-2012.tex, DANA-PICARD.tex, Ars-Comb-Catalan-Asymp-Qi.tex, Akkurt-Qi-Catalan-JCAA.tex, Qi-Fuss-Catalan-One.tex, Catalan-Int-Surv.tex, Qi-Guo-Sapientiae-2016.tex, Catalan-Qi-Funct-T.tex, MAAST-Qi.tex, MTJPAM-D-19-00007.tex, Catalan-GF-Plus.tex, Fuss-Catalan-Qi-S.tex, Catalan-Number-S.tex, Schur-Catalan-Gen.tex, Kims-Rus-Catalan.tex, K2-S-D-JCAA17.tex, GJMA-5055-Catalan-Shi-Liu-Qi.tex, Catalan-Qi.tex, applications.tex} and closely related references therein.
\par
The Catalan numbers $C_n$ for $n\ge0$ have several integral representations which have been surveyed in~\cite[Section~2]{Catalan-Int-Surv.tex}.
The integral representation
\begin{equation}\label{Catalan-No-Int-Wiki}
C_n=\frac1{2\pi}\int_0^4\sqrt{\frac{4-x}x}\,x^n\td x, \quad n\ge0
\end{equation}
was discovered in~\cite{Article-01.2.5} and applied in~\cite{K2-S-D-JCAA17.tex}. An alternative integral representation
\begin{equation}\label{Catalan-No-Int-Qi}
C_n=\frac1{\pi}\int_0^\infty\frac{\sqrt{t}\,}{(t+1/4)^{n+2}}\td t
\end{equation}
was derived from the integral representation
\begin{equation}\label{real-integral-form}
\frac1{1+\sqrt{1-4x}\,}=\frac1{2\pi}\int_0^{\infty}\frac{\sqrt{t}\,}{1/4+t}\frac{1}{1/4+t-x}\td t, \quad x\in\biggl(-\infty,\frac14\biggr]
\end{equation}
which was established in~\cite[Theorem~1.3]{Catalan-Number-S.tex} by virtue of Cauchy's integral formula in the theory of complex functions.
\par
The generalized Catalan function $C(a,b;z)$ defined by~\eqref{C(a,b;x)=dfn} has also several integral representations which have been surveyed in~\cite[Section~2]{Catalan-Int-Surv.tex}. For example, corresponding to integral representations in~\eqref{Catalan-No-Int-Wiki} and~\eqref{Catalan-No-Int-Qi}, integral representations
\begin{equation}\label{Gen-Catalan-Int-Short-Eq}
C(a,b;x)=\biggl(\frac{a}b\biggr)^{b-1} \frac1{B(a,b-a)} \int_0^{b/a}\biggl(\frac{b}a-t\biggr)^{b-a-1}t^{x+a-1}\td t
\end{equation}
and
\begin{equation}\label{Gen-Catalan-Int-Eq}
C(a,b;x)=\biggl(\frac{a}{b}\biggr)^{a}\frac1{B(a,b-a)}\int_0^\infty\frac{t^{b-a-1}}{(t+a/b)^{x+b}}\td t.
\end{equation}
for $b>a>0$ and $x\ge0$ were established in~\cite[Theorem~4]{Catalan-GF-Plus.tex}, where the classical beta function $B(z,w)$ can be defined or expressed~\cite{DANA-PICARD.tex, Temme-96-book} by
\begin{equation*}
B(z,w)=\int_0^1t^{z-1}(1-t)^{w-1}\td t
=\int_0^\infty\frac{t^{z-1}}{(1+t)^{z+w}}\td t
=\frac{\Gamma(z)\Gamma(w)}{\Gamma(z+w)}
\end{equation*}
for $\Re(z),\Re(w)>0$. We note that, when letting $a=\frac12$ and $b=2$, the integral representations~\eqref{Gen-Catalan-Int-Short-Eq} and~\eqref{Gen-Catalan-Int-Eq} become those in~\eqref{Catalan-No-Int-Wiki} and~\eqref{Catalan-No-Int-Qi} respectively.
\par
The generating function $G(x)$ in~\eqref{CatalanNo-Gen=F} can be regarded as a special case $a=\frac12$, $b=\frac14$, and $c=1$ of the function
\[
G_{a,b,c}(x)=\frac1{a+\sqrt{b-cx}\,}, \quad a\ge0,b,c>0.
\]
Essentially, it is better to regard the function
\begin{equation}\label{G(a-b-x)-GF}
G_{a,b}(x)=\frac1{a+\sqrt{b-x}\,}, \quad a\ge0,b>0
\end{equation}
as a generalization of the generating function $G(x)$, because
\begin{equation*}
G_{1/2,1/4}(x)=G(x), \quad G_{a,b}(x)=G_{a,b,1}(x), \quad G_{a,b,c}(x)=\frac{G_{a/\sqrt{c}\,,b/c}(x)}{\sqrt{c}\,},
\end{equation*}
but we can not express $G_{a,b}(x)$ in terms of $G(x)$.
\par
Now we would like to pose the following three problems.
\begin{enumerate}
\item
Can one establish an explicit formula for the sequence $\mathcal{C}_n(a,b)$ generated by
\begin{equation}\label{C-n(a-b)-gen-funct}
G_{a,b}(x)=\frac1{a+\sqrt{b-x}\,}=\sum_{k=0}^{\infty}\mathcal{C}_n(a,b)x^n
\end{equation}
for $a\ge0$ and $b>0$?
\item
Can one find an integral representation for the sequence $\mathcal{C}_n(a,b)$ by finding an integral representation of the generating function $G_{a,b}(x)$ in~\eqref{G(a-b-x)-GF}?
\item
Can one combinatorially interpret the sequence $\mathcal{C}_n(a,b)$ or some special case of $\mathcal{C}_n(a,b)$ except the case $a=\frac12$ and $b=\frac14$?
\end{enumerate}
\par
It is easy to see that
\begin{equation}\label{Catlan-Qi-a2zero}
\lim_{a\to0^+}\mathcal{C}_n(a,b)=\frac{(-1)^n}{n!}\biggl\langle-\frac12\biggr\rangle_n\frac{1}{b^{(2n+1)/2}}
\end{equation}
and
\begin{equation}\label{C-n(ab)=C-n-Eq}
\mathcal{C}_n\biggl(\frac12,\frac14\biggr)=C_n
\end{equation}
for $n\ge0$, where the notation
\begin{equation*}
\langle\alpha\rangle_n=
\prod_{k=0}^{n-1}(\alpha-k)=
\begin{cases}
\alpha(\alpha-1)\dotsm(\alpha-n+1), & n\ge1\\
1,& n=0
\end{cases}
\end{equation*}
for $\alpha\ne0$ is called the falling factorial~\cite{CDM-68111.tex, Bell-value-elem-funct.tex, Note-Elem-UPB-Qi-Zhang-Li.tex}.
Comparing~\eqref{C-n(ab)=C-n-Eq} with~\eqref{Cataln-Qi-Catalan-Rel} reveals that $C(a,b;n)\not\equiv\mathcal{C}_n(a,b)$, although it is possible that
\begin{equation*}
\{C(a,b;n): n\ge0,a\ge0,b>0\}=\{\mathcal{C}_n(a,b): n\ge0,a\ge0,b>0\}
\end{equation*}
or that there exist two $2$-tuples $(a_n,b_n)\in(0,\infty)\times(0,\infty)$ and $(\alpha_n,\beta_n)\in(0,\infty)\times(0,\infty)$ such that $C(a_n,b_n;n)=\mathcal{C}_n(\alpha_n,\beta_n)$ for all $n\ge0$.
\par
For our own convenience and referencing to the convention in mathematical community, while calling $C(a,b;n)$ for $n\ge0$, $a\ge0$, and $b>0$ generalized Catalan numbers of the first kind, we call $\mathcal{C}_n(a,b)$ for $n\ge0$, $a\ge0$, and $b>0$ generalized Catalan numbers of the second kind.
\par
In this paper, we will give solutions to the first two problems above: establishing an explicit formula for generalized Catalan numbers of the second kind $\mathcal{C}_n(a,b)$ and finding an integral representation for generalized Catalan numbers of the second kind $\mathcal{C}_n(a,b)$ by finding an integral representation of the generating function $G_{a,b}(x)$ in~\eqref{G(a-b-x)-GF}, while leaving the third problem above to interested combinatorists.

\section{An explicit formula for generalized Catalan numbers of the second kind}

In this section, we will establish an explicit formula for generalized Catalan numbers of the second kind $\mathcal{C}_n(a,b)$, which gives a solution to the first problem posed on page~\pageref{C-n(a-b)-gen-funct}.

\begin{thm}\label{C-n(a-b)-explicit-thm}
The generalized Catalan numbers of the second kind $\mathcal{C}_n(a,b)$ for $n\ge0$, $a\ge0$, and $b>0$ can be explicitly computed by
\begin{equation}\label{C-n(a-b)-explicit}
\mathcal{C}_n(a,b)=\frac{1}{(2n)!!b^{n+1/2}}\sum_{k=0}^{n}\binom{2n-k-1}{2(n-k)}\frac{k![2(n-k)-1]!!}{\bigl(1+a/\sqrt{b}\,\bigr)^{k+1}},
\end{equation}
where the double factorial of negative odd integers $-(2\ell+1)$ is defined by
\begin{equation*}
(-2\ell-1)!!=\frac{(-1)^\ell}{(2\ell-1)!!}=(-1)^\ell\frac{(2\ell)!!}{(2\ell)!}, \quad \ell\ge0.
\end{equation*}
\end{thm}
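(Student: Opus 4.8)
The plan is to read off the Taylor coefficients of $G_{a,b}$ at $x=0$ directly, after first stripping away the parameters by a scaling. Since $\sqrt{b-x}=\sqrt b\,\sqrt{1-x/b}$, I set $c=a/\sqrt b$ and $y=x/b$, so that $G_{a,b}(x)=\frac1{\sqrt b}\,h(y)$ with $h(y)=\frac1{c+\sqrt{1-y}}$. Matching the two power series then gives $\mathcal C_n(a,b)=b^{-(n+1/2)}[y^n]h(y)$, where $[y^n]$ denotes extraction of the coefficient of $y^n$. This concentrates all the combinatorics into the single universal series $h$, which depends on the data only through $c=a/\sqrt b$, exactly the quantity appearing in the claimed formula.

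Next I would expand $h$ as a geometric series in $s=s(y)=1-\sqrt{1-y}$, a power series in $y$ with $s(0)=0$. Writing $c+\sqrt{1-y}=(1+c)-s$ yields, as an identity of formal power series, $h(y)=\frac1{1+c}\sum_{k\ge0}\bigl(\frac{s}{1+c}\bigr)^k=\sum_{k\ge0}\frac{s^k}{(1+c)^{k+1}}$. Because $s$ vanishes at $y=0$, the order of $s^k$ in $y$ equals $k$, so only $0\le k\le n$ contribute to the coefficient of $y^n$, and I obtain the finite sum $[y^n]h(y)=\sum_{k=0}^n\frac{[y^n]s^k}{(1+c)^{k+1}}$. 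This already reproduces the denominators $(1+a/\sqrt b)^{-(k+1)}$ in the statement, so everything reduces to evaluating $[y^n]s^k$.

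The heart of the argument is this last coefficient, which I would compute by inverting the defining relation. From $\sqrt{1-y}=1-s$ one gets $1-y=(1-s)^2$, hence $y=2s-s^2=s/\phi(s)$ with $\phi(s)=\frac1{2-s}$ and $\phi(0)=\frac12\ne0$. The Lagrange--Bürmann formula then gives $[y^n]s^k=\frac{k}{n}[s^{\,n-k}]\phi(s)^n=\frac{k}{n}\,2^{-n}[s^{\,n-k}](1-s/2)^{-n}$, and expanding $(1-s/2)^{-n}$ by the binomial series produces the closed form $[y^n]s^k=\frac{k}{n}\binom{2n-k-1}{n-k}2^{-(2n-k)}$ for $n\ge1$.

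Finally I would translate this into the double-factorial shape of the statement. Using $(2n)!!=2^n n!$ and $(2\ell-1)!!=\frac{(2\ell)!}{2^\ell\ell!}$ (equivalently $\frac{(2(n-k)-1)!!}{(2(n-k))!}=\frac1{(2(n-k))!!}$), a short factorial manipulation rewrites $\frac{k}{n}\binom{2n-k-1}{n-k}2^{-(2n-k)}$ as $\frac1{(2n)!!}\binom{2n-k-1}{2(n-k)}k!\,[2(n-k)-1]!!$; substituting this into $\mathcal C_n(a,b)=b^{-(n+1/2)}[y^n]h(y)$ gives \eqref{C-n(a-b)-explicit}. The main obstacle I expect is not conceptual but bookkeeping: the Lagrange inversion step is valid for $n\ge1$ so the case $n=0$ must be verified separately (where the formula correctly returns $\mathcal C_0(a,b)=\frac1{a+\sqrt b}=G_{a,b}(0)$), and the passage between the binomial/power-of-two form and the double-factorial form requires careful handling of the conventions for $(-1)!!$ and the negative odd double factorials recorded in the statement.
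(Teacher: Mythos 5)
Your argument is correct, and it reaches \eqref{C-n(a-b)-explicit} by a genuinely different route from the paper. The paper differentiates $G_{a,b}$ $n$ times at $x=0$ via the Fa\`a di Bruno formula, writing $G_{a,b}=f\circ h$ with $h(x)=\sqrt{b-x}$ and $f(u)=1/(a+u)$; the combinatorial content is then entirely delegated to the known closed form for the Bell polynomials of the second kind at half-integer falling factorials, $\bell_{n,k}\bigl(\langle\frac12\rangle_1,\dotsc,\langle\frac12\rangle_{n-k+1}\bigr)=(-1)^{n+k}[2(n-k)-1]!!\,2^{-n}\binom{2n-k-1}{2(n-k)}$, quoted from the literature, together with the homogeneity identity $\bell_{n,k}(abx_1,\dotsc,ab^{n-k+1}x_{n-k+1})=a^kb^n\bell_{n,k}(x_1,\dotsc,x_{n-k+1})$. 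Your scaling to $c=a/\sqrt b$ plays exactly the role of that homogeneity identity, and your Lagrange--B\"urmann computation of $[y^n]s^k$ with $s=1-\sqrt{1-y}$, $y=s(2-s)$, is in effect an independent derivation of the content of that Bell-polynomial special value: the two quantities agree term by term, since $\frac{k}{n}\binom{2n-k-1}{n-k}2^{-(2n-k)}=\frac{k\,(2n-k-1)!}{2^{2n-k}n!\,(n-k)!}=\frac{1}{(2n)!!}\binom{2n-k-1}{2(n-k)}k!\,[2(n-k)-1]!!$. What your approach buys is self-containedness (no appeal to tabulated Bell-polynomial values) and a cleaner explanation of where the denominators $(1+a/\sqrt b)^{k+1}$ come from, namely the geometric series in $s$; what the paper's approach buys is that it is purely mechanical once the special value is accepted, and it fits the authors' broader program of exploiting such Bell-polynomial identities. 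Your two flagged caveats are the right ones and are both handled correctly: $n=0$ checks directly against $G_{a,b}(0)=1/(a+\sqrt b)$ using $(-1)!!=1$ and $\binom{-1}{0}=1$, and the $k=0$ term vanishes on both sides for $n\ge1$ because $\binom{2n-1}{2n}=0$.
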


\begin{proof}
The Bell polynomials of the second kind $\bell_{n,k}(x_1,x_2,\dotsc,x_{n-k+1})$ for $n\ge k\ge0$ are defined in~\cite[p.~134, Theorem~A]{Comtet-Combinatorics-74} by
\begin{equation*}
\bell_{n,k}(x_1,x_2,\dotsc,x_{n-k+1})=\sum_{\substack{1\le i\le n-k+1\\ \ell_i\in\{0\}\cup\mathbb{N}\\ \sum_{i=1}^{n-k+1}i\ell_i=n\\
\sum_{i=1}^{n-k+1}\ell_i=k}}\frac{n!}{\prod_{i=1}^{n-k+1}\ell_i!} \prod_{i=1}^{n-k+1}\biggl(\frac{x_i}{i!}\biggr)^{\ell_i}.
\end{equation*}
The famous Fa\`a di Bruno formula can be described~\cite[p.~139, Theorem~C]{Comtet-Combinatorics-74} in terms of the Bell polynomials of the second kind $\bell_{n,k}(x_1,x_2,\dotsc,x_{n-k+1})$ by
\begin{equation}\label{Bruno-Bell-Polynomial}
\frac{\td^n}{\td x^n}f\circ h(x)=\sum_{k=0}^nf^{(k)}(h(x)) \bell_{n,k}\bigl(h'(x),h''(x),\dotsc,h^{(n-k+1)}(x)\bigr),
\end{equation}
where $f\circ h$ denotes the composite of the $n$-time differentiable functions $f$ and $h$.
\par
Let $h=h(x)=\sqrt{b-x}\,$. Then
\begin{equation*}
h^{(k)}(x)=(-1)^k\biggl\langle\frac{1}{2}\biggr\rangle_k(b-x)^{1/2-k}\to(-1)^k\biggl\langle\frac{1}{2}\biggr\rangle_kb^{1/2-k},\quad x\to0
\end{equation*}
for $k\ge0$ and, in light of the formula~\eqref{Bruno-Bell-Polynomial},
\begin{align*}
\frac{\td^nG_{a,b}(x)}{\td x^n}
&=\sum_{k=0}^{n}\frac{\td^k}{\td h^k}\biggl(\frac{1}{a+h}\biggr)\bell_{n,k}\bigl(h'(x),h''(x),\dotsc,h^{(n-k+1)}(x)\bigr)\\
&=\sum_{k=0}^{n}(-1)^k\frac{k!}{[a+h(x)]^{k+1}}\bell_{n,k}\bigl(h'(x),h''(x),\dotsc,h^{(n-k+1)}(x)\bigr)\\
&\to\sum_{k=0}^{n}(-1)^k\frac{k!}{[a+h(0)]^{k+1}}\bell_{n,k}\biggl(-\biggl\langle\frac{1}{2}\biggr\rangle_1b^{-1/2}, \biggl\langle\frac{1}{2}\biggr\rangle_2b^{-3/2},\\
&\quad\dotsc, (-1)^{n-k+1}\biggl\langle\frac{1}{2}\biggr\rangle_{n-k+1} b^{1/2-(n-k+1)}\biggr), \quad x\to0\\
&=\sum_{k=0}^{n}(-1)^k\frac{k!}{\bigl(a+\sqrt{b}\,\bigr)^{k+1}}(-1)^nb^{k/2-n} \bell_{n,k}\biggl(\biggl\langle\frac{1}{2}\biggr\rangle_1, \biggl\langle\frac{1}{2}\biggr\rangle_2,\dotsc,\biggl\langle\frac{1}{2}\biggr\rangle_{n-k+1}\biggr)\\
&=\frac{1}{2^nb^{n+1/2}}\sum_{k=0}^{n}k! [2(n-k)-1]!!\binom{2n-k-1}{2(n-k)}\biggl(\frac{\sqrt{b}\,}{a+\sqrt{b}\,}\biggr)^{k+1},
\end{align*}
where we used the formulas
\begin{equation*}
\bell_{n,k}\bigl(abx_1,ab^2x_2,\dotsc,ab^{n-k+1}x_{n-k+1}\bigr) =a^kb^n\bell_{n,k}(x_1,x_2,\dotsc,x_{n-k+1})
\end{equation*}
and
\begin{equation}\label{Bell-Polyn-Half}
\bell_{n,k}\biggl(\biggl\langle\frac12\biggr\rangle_1, \biggl\langle\frac12\biggr\rangle_2,\dotsc, \biggl\langle\frac12\biggr\rangle_{n-k+1}\biggr)
=(-1)^{n+k}[2(n-k)-1]!!\biggl(\frac12\biggr)^n\binom{2n-k-1}{2(n-k)}
\end{equation}
for $n\ge k\ge0$, see~\cite[p.~135]{Comtet-Combinatorics-74} and the formula~(3.6) in the first two lines on~\cite[p.~168]{CDM-68111.tex} respectively. By the way, the formula~\eqref{Bell-Polyn-Half} is connected with~\cite[Remark~1]{ODE-Qi-Guo-Kim.tex}, \cite[Section~1.3]{Bell-value-elem-funct.tex}, \cite[Theorem~4]{JAAC961.tex}, and closely related references therein.
\par
The equation~\eqref{C-n(a-b)-gen-funct} means that
\begin{equation*}
n!\mathcal{C}_n(a,b)=\lim_{x\to0}\frac{\td^nG_{a,b}(x)}{\td x^n}.
\end{equation*}
Consequently, we obtain the explicit formula
\begin{equation*}
\mathcal{C}_n(a,b)
=\frac{1}{(2n)!!b^{n+1/2}}\sum_{k=0}^{n}k! [2(n-k)-1]!! \binom{2n-k-1}{2(n-k)} \biggl(\frac{\sqrt{b}\,}{a+\sqrt{b}\,}\biggr)^{k+1},
\end{equation*}
which can be rearranged as~\eqref{C-n(a-b)-explicit}. The proof of Theorem~\ref{C-n(a-b)-explicit-thm} is complete.
\end{proof}

\begin{cor}[{\cite[Theorem~1.3]{JAAC961.tex}}]\label{C-n(a-b)-explicit-cor}
The Catalan number $C_n$ for $n\ge0$ can be explicitly computed by
\begin{equation}\label{C-n-explicit}
C_n=\frac{1}{n!}\sum_{\ell=0}^{n}\binom{n+\ell-1}{2\ell}2^{\ell}(n-\ell)!(2\ell-1)!!.
\end{equation}
\end{cor}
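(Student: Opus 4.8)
The plan is to obtain this corollary by directly specializing Theorem~\ref{C-n(a-b)-explicit-thm} at the values $a=\frac12$ and $b=\frac14$, and then invoking the identity~\eqref{C-n(ab)=C-n-Eq}, which asserts $\mathcal{C}_n\bigl(\frac12,\frac14\bigr)=C_n$. Thus the entire argument reduces to simplifying the right-hand side of~\eqref{C-n(a-b)-explicit} under this substitution and reindexing the resulting finite sum so that it matches~\eqref{C-n-explicit}.

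First I would record the elementary simplifications produced by $a=\frac12$ and $b=\frac14$. Here $\sqrt{b}=\frac12$, so that $a/\sqrt{b}=1$ and hence $\bigl(1+a/\sqrt{b}\,\bigr)^{k+1}=2^{k+1}$; moreover $b^{n+1/2}=2^{-(2n+1)}$ and, using $(2n)!!=2^nn!$, the prefactor collapses to $\frac{1}{(2n)!!\,b^{n+1/2}}=\frac{2^{n+1}}{n!}$. Substituting these into~\eqref{C-n(a-b)-explicit} and absorbing the powers of $2$ turns the $k$-th summand into $\binom{2n-k-1}{2(n-k)}k!\,[2(n-k)-1]!!\,2^{n-k}$, giving $C_n=\frac1{n!}\sum_{k=0}^{n}\binom{2n-k-1}{2(n-k)}k!\,[2(n-k)-1]!!\,2^{n-k}$.

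The final step is the change of summation index $\ell=n-k$, which merely reverses the order of summation (as $k$ runs from $0$ to $n$, so does $\ell$). Under this substitution one checks termwise that $2n-k-1=n+\ell-1$, $2(n-k)=2\ell$, $k!=(n-\ell)!$, $[2(n-k)-1]!!=(2\ell-1)!!$, and $2^{n-k}=2^{\ell}$, which transforms the sum exactly into $\frac1{n!}\sum_{\ell=0}^{n}\binom{n+\ell-1}{2\ell}2^{\ell}(n-\ell)!(2\ell-1)!!$, the right-hand side of~\eqref{C-n-explicit}. I do not anticipate any genuine obstacle: the proof is purely a matter of careful bookkeeping, the only points deserving attention being the simplifications $(2n)!!=2^nn!$ and $b^{n+1/2}=2^{-(2n+1)}$ together with the index reversal, all of which are routine.
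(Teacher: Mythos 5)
Your proposal is correct and follows exactly the paper's own route: specialize Theorem~\ref{C-n(a-b)-explicit-thm} at $a=\frac12$, $b=\frac14$, invoke~\eqref{C-n(ab)=C-n-Eq}, and reindex with $\ell=n-k$. The paper states this in one line without the bookkeeping; your computations (the prefactor $\frac{2^{n+1}}{n!}$ and the termwise identifications under the index reversal) are accurate and simply make the omitted details explicit.
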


\begin{proof}
This follows from utilizing the relation~\eqref{C-n(ab)=C-n-Eq} and applying $a=\frac12$ and $b=\frac{1}{4}$ in~\eqref{C-n(a-b)-explicit}. The proof of Corollary~\ref{C-n(a-b)-explicit-cor} is complete.
\end{proof}

\begin{rem}
Taking $a\to0^+$ on both sides of the formula~\eqref{C-n(a-b)-explicit} and employing the equation~\eqref{Catlan-Qi-a2zero} result in
\begin{equation*}
\sum_{k=0}^{n}\binom{n+\ell-1}{2\ell}(n-\ell)!(2\ell-1)!!
=(2n-1)!!.
\end{equation*}
Combining~\eqref{C-n-explicit} with the first equality in~\eqref{Catalan-1Exp} gives
\begin{equation*}
\sum_{\ell=0}^{n}\binom{n+\ell-1}{2\ell}(n-\ell)!(2\ell-1)!!2^{\ell}=\frac{n!}{n+1}\binom{2n}{n}.
\end{equation*}
Stimulated by these two identities and the formula~\eqref{C-n(a-b)-explicit} in Theorem~\ref{C-n(a-b)-explicit-thm}, we would like to ask a question: can one use a simple quantity to express the sum
\begin{equation*}
\sum_{\ell=0}^{n}\binom{n+\ell-1}{2\ell}(n-\ell)!(2\ell-1)!!t^{\ell}
\end{equation*}
for $t\in\mathbb{R}\setminus\{0,1,2\}$?
\end{rem}

\section{An integral representation for generalized Catalan numbers of the second kind}

In this section, we will find an integral representation for generalized Catalan numbers of the second kind $\mathcal{C}_n(a,b)$ by finding an integral representation of the generating function $G_{a,b}(x)$ in~\eqref{G(a-b-x)-GF}, which gives a solution to the second problem posed on page~\pageref{C-n(a-b)-gen-funct}.

\begin{thm}\label{G-a-b(x)=int-thm}
The principal branch of the generating function $G_{a,b}(z)$ for $a\ge0$ and $b>0$ can be represented by
\begin{equation}\label{G-a-b(x)=int-Eq}
G_{a,b}(z)=\frac{1}{a+\sqrt{b-z}\,}=\frac{1}{\pi}\int_{0}^{\infty}\frac{\sqrt{t}\,}{a^2+t}\frac1{b+t-z}\td t, \quad z\in\mathbb{C}\setminus[b,\infty).
\end{equation}
Consequently, generalized Catalan numbers of the second kind $\mathcal{C}_n(a,b)$ for $a\ge0$ and $b>0$ can be represented by
\begin{equation}\label{C-n-a-b(x)=int-Eq}
\mathcal{C}_n(a,b)=\frac{1}{\pi}\int_{0}^{\infty}\frac{\sqrt{t}\,}{a^2+t}\frac{1}{(b+t)^{n+1}}\td t, \quad n\ge0.
\end{equation}
\end{thm}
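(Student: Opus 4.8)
The plan is to prove the integral representation~\eqref{G-a-b(x)=int-Eq} first, since~\eqref{C-n-a-b(x)=int-Eq} then drops out by extracting Taylor coefficients at $z=0$. Following the Cauchy-integral-formula strategy of~\cite[Theorem~1.3]{Catalan-Number-S.tex}, I would start from the observation that, under the principal branch, the only singularity of $G_{a,b}(w)=\frac1{a+\sqrt{b-w}\,}$ in the finite plane is the branch cut of $\sqrt{b-w}\,$ along the ray $[b,\infty)$ (the would-be pole $\sqrt{b-w}=-a$ cannot occur, since $\Re\sqrt{b-w}>0$ off the cut while $-a<0$). Hence $G_{a,b}$ is holomorphic on $\mathbb{C}\setminus[b,\infty)$, and for fixed $z$ in this slit plane I would apply Cauchy's integral formula on a large circle $C_R$ of radius $R$ centered at $b$, indented by a keyhole that runs along both banks of the cut and a small circle of radius $\delta$ around the branch point $b$.

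Next I would let $R\to\infty$ and $\delta\to0^+$. Because $G_{a,b}(w)=O\bigl(|w|^{-1/2}\bigr)$ as $|w|\to\infty$, the integrand $\frac{G_{a,b}(w)}{w-z}$ is $O\bigl(|w|^{-3/2}\bigr)$, so the contribution of $C_R$ is $O\bigl(R^{-1/2}\bigr)\to0$; and since $G_{a,b}$ stays bounded near $b$ with $G_{a,b}(b)=1/a$, the small circle contributes $O(\delta)\to0$. What survives is the integral of the jump of $G_{a,b}$ across the two banks of the cut. Parametrizing the cut by $w=b+t$ with $t\ge0$ and tracking the principal branch, one finds $\sqrt{b-w}\to\mp i\sqrt t\,$ on the upper and lower banks, so the boundary values are $\frac1{a\mp i\sqrt t\,}$ and the jump (upper minus lower) equals $\frac{2i\sqrt t}{a^2+t}$. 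Inserting this into the Cauchy formula, the factor $2i$ cancels the $\frac1{2\pi i}$ and leaves exactly $\frac1\pi\int_0^\infty\frac{\sqrt t}{a^2+t}\frac{1}{b+t-z}\td t$, which is~\eqref{G-a-b(x)=int-Eq}.

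The step I expect to be most delicate is this jump computation together with the contour orientation: fixing the correct principal-branch values $\mp i\sqrt t$ on the two banks and confirming that the induced orientation (region on the left) produces the upper-minus-lower combination, so that all signs conspire to the stated positive kernel. A clean cross-check, which I would include to remove any doubt about signs and branch choices, is a purely real evaluation of the right-hand side: substituting $t=s^2$ turns it into $\frac2\pi\int_0^\infty\frac{s^2\,\td s}{(a^2+s^2)(c^2+s^2)}$ with $c=\sqrt{b-z}\,$ and $\Re c>0$ for $z\in\mathbb{C}\setminus[b,\infty)$; partial fractions together with $\int_0^\infty\frac{\td s}{\kappa^2+s^2}=\frac{\pi}{2\kappa}$ for $\Re\kappa>0$ collapse this to $\frac{a-c}{a^2-c^2}=\frac1{a+c}=G_{a,b}(z)$, confirming~\eqref{G-a-b(x)=int-Eq} on the whole slit plane.

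Finally, for~\eqref{C-n-a-b(x)=int-Eq} I would expand the Cauchy kernel as a geometric series, $\frac1{b+t-z}=\sum_{n=0}^\infty\frac{z^n}{(b+t)^{n+1}}$, valid for $|z|<b\le b+t$, insert it into~\eqref{G-a-b(x)=int-Eq}, and interchange summation and integration (justified by dominated convergence on compact subsets of $|z|<b$). Matching the resulting power series against the defining expansion $G_{a,b}(z)=\sum_{n\ge0}\mathcal{C}_n(a,b)z^n$ in~\eqref{C-n(a-b)-gen-funct} then yields $\mathcal{C}_n(a,b)=\frac1\pi\int_0^\infty\frac{\sqrt t}{a^2+t}\frac{1}{(b+t)^{n+1}}\td t$, which is~\eqref{C-n-a-b(x)=int-Eq}; equivalently one may differentiate~\eqref{G-a-b(x)=int-Eq} $n$ times under the integral sign and set $z=0$.
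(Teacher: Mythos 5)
Your proposal is correct and follows essentially the same route as the paper: Cauchy's integral formula on a keyhole contour around the branch cut $[b,\infty)$ (the paper works with the shifted function $F(z)=1/\bigl(a+\exp\frac{\ln(-z)}2\bigr)$ cut along $[0,\infty)$ and then translates by $b$), with the large arc and small circle contributions vanishing and the jump $\frac{2i\sqrt{t}}{a^2+t}$ across the cut producing the kernel, followed by differentiation under the integral sign at $z=0$ for~\eqref{C-n-a-b(x)=int-Eq}. Your real-variable cross-check via $t=s^2$ and partial fractions is a welcome independent verification not present in the paper, but the core argument is the same.
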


\begin{proof}
Let
\begin{equation*}
F(z)=\frac1{a+\exp\frac{\ln(-z)}2}, \quad z\in\mathbb{C}\setminus[0,\infty), \quad \arg z\in(0,2\pi),
\end{equation*}
where $i=\sqrt{-1}\,$ is the imaginary unit and $\arg z$ stands for the principal value of the argument of $z$.
By virtue of Cauchy's integral formula~\cite[p.~113]{Gamelin-book-2001} in the theory of complex functions, for any fixed point $z_0=x_0+iy_0\in\mathbb{C}\setminus[0,\infty)$, we have
\begin{equation*}
F(z_0)=\frac1{2\pi i}\int_L\frac{F(\xi)}{\xi-z_0}\td\xi,
\end{equation*}
where $L$ is a positively oriented contour $L(r,R)$ in $\mathbb{C}\setminus[0,\infty)$, as showed in Figure~\ref{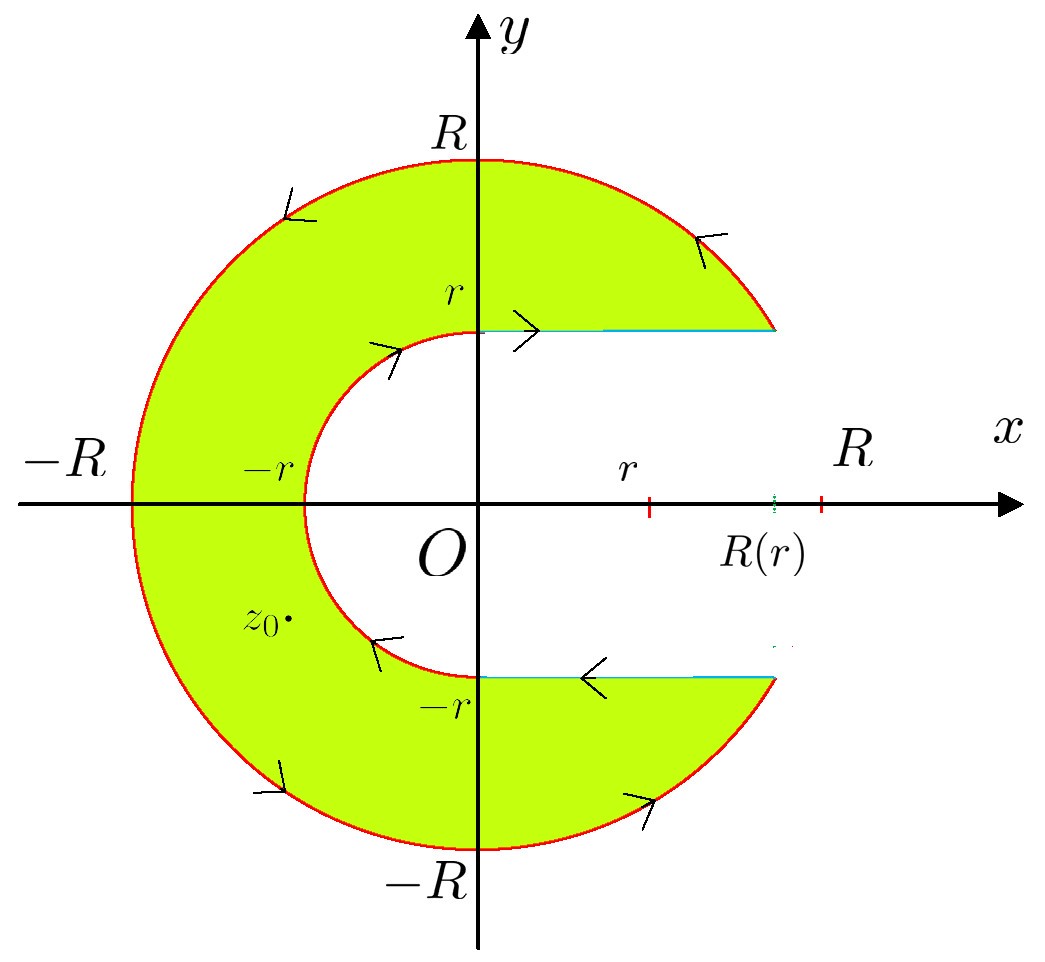},
\begin{figure}[htbp]
  \centering
  \includegraphics[width=0.8\textwidth]{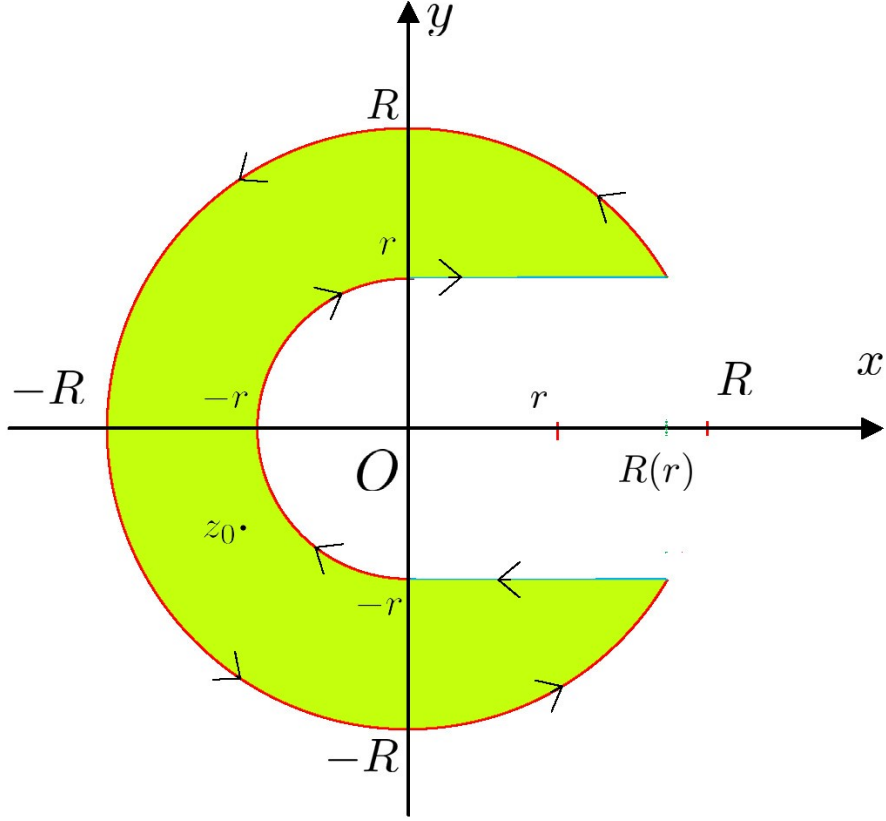}
\caption{The positively oriented contour $L(r,R)$ in $\mathbb{C}\setminus[0,\infty)$}
\label{Catalan-Gen-Int-Formula.jpg}
\end{figure}
satisfying
\begin{enumerate}
\item
$0<r<|z_0|<R$;
\item
$L(r,R)$ consists of the half circle $z=re^{i\theta}$ for $\theta\in\bigl[\frac\pi2,\frac{3\pi}2\bigr]$;
\item
$L(r,R)$ consists of the line segments $z=x\pm ir$ for $x\in(0,R(r)]$, where $R(r)=\sqrt{R^2-r^2}\,$;
\item
$L(r,R)$ consists of the circular arc $z=Re^{i\theta}$ for
$$
\theta\in\biggl(\arctan\frac{r}{R(r)},2\pi-\arctan\frac{r}{R(r)}\biggr);
$$
\item
the line segments $z=x\pm ir$  for $x\in(0,R(r)]$ cut the circle $|z|=R$ at the points $R(r)\pm ir$ and $R(r)\to R$ as $r\to0^+$.
\end{enumerate}
\par
The integral on the circular arc $z=Re^{i\theta}$ equals
\begin{align*}
&\quad\frac1{2\pi i}\int_{\arcsin[r/R(r)]}^{2\pi-\arcsin[r/R(r)]}
\frac{Rie^{i\theta}}{\bigl(Re^{i\theta}-z_0\bigr)\bigl[a+\exp\frac{\ln(-Re^{i\theta})}2\bigr]}\td\theta\\
&=\frac1{2\pi}\int_{\arcsin[r/R(r)]}^{2\pi-\arcsin[r/R(r)]}
\frac1{\bigl(1-\frac{z_0}{Re^{i\theta}}\bigr) \bigl[a+\exp\frac{\ln(-Re^{i\theta})}2\bigr]}\td\theta\\
&=\frac1{2\pi}\int_{\arcsin[r/R(r)]}^{2\pi-\arcsin[r/R(r)]}
\frac1{\bigl(1-\frac{z_0}{Re^{i\theta}}\bigr) \bigl[a+\exp\frac{\ln R+i\arg (-Re^{i\theta})}2\bigr]}\td\theta\\
&\to0
\end{align*}
uniformly as $R\to\infty$.
\par
The integral on the half circle $z=re^{i\theta}$ for $\theta\in\bigl[\frac\pi2,\frac{3\pi}2\bigr]$ is
\begin{align*}
&\quad-\frac1{2\pi i}\int_{\pi/2}^{3\pi/2}
\frac{rie^{i\theta}}{\bigl(re^{i\theta}-z_0\bigr) \bigl[a+\exp\frac{\ln(-re^{i\theta})}2\bigr]}\td\theta\\
&=-\frac1{2\pi}\int_{\pi/2}^{3\pi/2}
\frac{re^{i\theta}}{re^{i\theta}-z_0} \frac1{a+\exp\frac{\ln(-re^{i\theta})}2}\td\theta\\
&=-\frac1{2\pi}\int_{\pi/2}^{3\pi/2}
\frac{re^{i\theta}}{re^{i\theta}-z_0} \frac1{a+\exp\frac{\ln r+i\arg (-re^{i\theta})}2}\td\theta\\
&=-\frac1{2\pi}\int_{\pi/2}^{3\pi/2}
\frac{re^{i\theta}}{re^{i\theta}-z_0} \frac1{a+\sqrt{r}\,\exp\frac{i\arg (-re^{i\theta})}2}\td\theta\\
&\to0
\end{align*}
uniformly as $r\to0^+$.
\par
Since
\begin{align*}
F(x+ir)&=\frac1{a+\exp\frac{\ln(-x-ri)}2}\\
&=\frac1{a+\exp\frac{\ln\sqrt{x^2+r^2}\,+i[\arctan(r/x)-\pi]}2}\\
&=\frac1{a+\sqrt[4]{x^2+r^2}\, \bigl[\cos\frac{\arctan(r/x)-\pi}2 +i\sin\frac{\arctan(r/x)-\pi}2\bigr]}\\
&=\frac1{a+\sqrt[4]{x^2+r^2}\, \bigl[\sin\frac{\arctan(r/x)}2 -i\cos\frac{\arctan(r/x)}2\bigr]}\\
&=\frac1{a+\sqrt[4]{x^2+r^2}\,\sin\frac{\arctan(r/x)}2 -i\sqrt[4]{x^2+r^2}\,\cos\frac{\arctan(r/x)}2}\\
&=\frac{a+\sqrt[4]{x^2+r^2}\,\sin\frac{\arctan(r/x)}2
+i\sqrt[4]{x^2+r^2}\,\cos\frac{\arctan(r/x)}2}
{\bigl[a+\sqrt[4]{x^2+r^2}\,\sin\frac{\arctan(r/x)}2\bigr]^2
+\bigl[\sqrt[4]{x^2+r^2}\,\cos\frac{\arctan(r/x)}2\bigr]^2}\\
&\to\frac{a+i\sqrt{x}\,}{a^2+x}
\end{align*}
as $r\to0^+$ and $\overline{F(z)}=F(\bar{z})$, the integral on the line segments $z=x\pm ir$  for $x\in(0,R(r)]$ is equal to
\begin{align*}
&\quad\frac1{2\pi i}\Biggl[\int_0^{R(r)}\frac {F(x+ir)}{x+ir-z_0}\td x
+\int_{R(r)}^0\frac {F(x-ir)}{x-ir-z_0}\td x\Biggr]\\
&=\frac1{2\pi i}\int_0^{R(r)}\frac{(x-ir-z_0)F(x+ir)-(x+ir-z_0)F(x-ir)}{(x+ir-z_0)(x-ir-z_0)}\td x\\
&=\frac1{2\pi i}\int_0^{R(r)} \frac{(x-z_0)[F(x+ir)-F(x-ir)]-ir[F(x+ir)+F(x-ir)]}{(x+ir-z_0)(x-ir-z_0)}\td x\\
&=\frac1{2\pi i}\int_0^{R(r)}
\frac{(x-z_0)\bigl[F(x+ir)-F\bigl(\overline{x+ir}\bigr)\bigr] -ir\bigl[F(x+ir)+F\bigl(\overline{x+ir}\bigr)\bigr]}
{(x+ir-z_0)(x-ir-z_0)}\td x\\
&=\frac1{2\pi i}\int_0^{R(r)}\frac{(x-z_0)\bigl[F(x+ir)-\overline{F(x+ir)}\,\bigr] -ir\bigl[F(x+ir)+\overline{F(x+ir)}\,\bigr]}{(x+ir-z_0)(x-ir-z_0)}\td x\\
&=\frac1{2\pi i}\int_0^{R(r)}\frac{(x-z_0)[2i\Im(F(x+ir))] -ir[2\Re(F(x+ir))]}{(x+ir-z_0)(x-ir-z_0)}\td x\\
&\to\frac1{2\pi i}\int_0^\infty\frac{2i}{x-z_0}\frac{\sqrt{x}\,}{a^2+x}\td x, \quad r\to0^+, \quad R\to\infty\\
&=\frac1\pi\int_0^\infty\frac{\sqrt{x}}{(a^2+x)(x-z_0)}\td x.
\end{align*}
Consequently, it follows that
\begin{equation}\label{integral-form-F(z0)}
\frac1{a+\exp\frac{\ln(-z_0)}2}=\frac1{\pi}\int_0^{\infty}\frac{\sqrt{x}}{(a^2+x)(x-z_0)}\td x
\end{equation}
for any $z_0\in\mathbb{C}\setminus[0,\infty)$ and $\arg z_0\in(0,2\pi)$. Due to the point $z_0$ in~\eqref{integral-form-F(z0)} being arbitrary, the integral formula~\eqref{integral-form-F(z0)} can be rearranged as
\begin{equation}\label{integral-form-F(z)}
F(z)=\frac1{a+\exp\frac{\ln(-z)}2}=\frac1{\pi}\int_0^{\infty}\frac{\sqrt{x}}{(a^2+x)(x-z)}\td x
\end{equation}
for $z\in\mathbb{C}\setminus[0,\infty)$ and $\arg z\in(0,2\pi)$.
\par
Let
\begin{equation*}
f(z)=\frac1{a+\exp\frac{\ln(b-z)}2}, \quad z\in\mathbb{C}\setminus[b,\infty), \quad \arg(z-b)\in(0,2\pi).
\end{equation*}
Then $f(z)=F(z-b)$. Therefore, from~\eqref{integral-form-F(z)}, it follows that
\begin{equation*}
f(z)=\frac1{a+\exp\frac{\ln(b-z)}2}=\frac1{\pi}\int_0^{\infty}\frac{\sqrt{x}}{(a^2+x)(b+x-z)}\td x
\end{equation*}
for $z\in\mathbb{C}\setminus[b,\infty)$ and $\arg(z-b)\in(0,2\pi)$. The integral representation~\eqref{G-a-b(x)=int-Eq} is thus proved.
\par
Differentiating $n\ge0$ times with respect to $z$ on both sides of~\eqref{G-a-b(x)=int-Eq} and taking the limit $z\to0$ yield
\begin{align*}
G_{a,b}^{(n)}(z)&=\frac{\td^n}{\td z^n}\biggl(\frac{1}{a+\sqrt{b-z}\,}\biggr)\\
&=\frac{1}{\pi}\int_{0}^{\infty}\frac{\sqrt{t}\,}{a^2+t}\frac{\td^n}{\td z^n}\biggl(\frac1{b+t-z}\biggr)\td t\\
&=\frac{1}{\pi}\int_{0}^{\infty}\frac{\sqrt{t}\,}{a^2+t}\frac{n!}{(b+t-z)^{n+1}}\td t\\
&\to\frac{n!}{\pi}\int_{0}^{\infty}\frac{\sqrt{t}\,}{a^2+t}\frac{1}{(b+t)^{n+1}}\td t, \quad z\to0.
\end{align*}
As a result, by virtue of~\eqref{C-n(a-b)-gen-funct}, we have
\begin{equation*}
\mathcal{C}_n(a,b)=\frac{G_{a,b}^{(n)}(0)}{n!}=\frac{1}{\pi}\int_{0}^{\infty}\frac{\sqrt{t}\,}{a^2+t}\frac{1}{(b+t)^{n+1}}\td t.
\end{equation*}
The integral representation~\eqref{C-n-a-b(x)=int-Eq} for generalized Catalan numbers of the second kind $\mathcal{C}_n(a,b)$ is thus proved.
The proof of Theorem~\ref{G-a-b(x)=int-thm} is complete.
\end{proof}

\begin{rem}
When taking $z=x\in(-\infty,b)$, the integral formula~\eqref{G-a-b(x)=int-Eq} becomes
\begin{equation}\label{G-a-b(z2x)=int-Eq}
\frac{1}{a+\sqrt{b-x}\,}=\frac{1}{\pi}\int_{0}^{\infty}\frac{\sqrt{t}\,}{a^2+t}\frac1{b+t-x}\td t, \quad a\ge0,b>0.
\end{equation}
When taking $x\to b^-$, the integral in~\eqref{G-a-b(z2x)=int-Eq} converges. Consequently, the integral representation~\eqref{G-a-b(z2x)=int-Eq} is valid on $(-\infty,b]$.
\end{rem}

\begin{rem}
When taking $a=\frac{1}{2}$ and $b=\frac{1}{4}$, integral representations~\eqref{G-a-b(x)=int-Eq} and~\eqref{C-n-a-b(x)=int-Eq} reduce to~\eqref{real-integral-form} and~\eqref{Catalan-No-Int-Qi} respectively.
\end{rem}

\begin{rem}
Combining the explicit formula~\eqref{C-n(a-b)-explicit} with the integral representation~\eqref{C-n-a-b(x)=int-Eq} and simplifying lead to
\begin{equation*}
\int_{0}^{\infty}\frac{\sqrt{t}\,}{a^2+t}\frac{1}{(b+t)^{n+1}}\td t
=\frac{\pi}{(2n)!!b^{n+1/2}}\sum_{k=0}^{n}\binom{2n-k-1}{2(n-k)}\frac{k![2(n-k)-1]!!}{\bigl(1+a/\sqrt{b}\,\bigr)^{k+1}}
\end{equation*}
for $a\ge0$, $b>0$, and $n\ge0$.
\end{rem}

\section{Potential directions to further study}

In this section, we will try to point out two potential directions to further study.

\subsection{Generalized Catalan function of the second kind}
Motivated by the integral representation~\eqref{C-n-a-b(x)=int-Eq} for generalized Catalan numbers of the second kind $\mathcal{C}_n(a,b)$, we can consider the function
\begin{equation}\label{C-n-a-b(x)=CQ-Eq}
\mathcal{C}(a,b;z)=\frac{1}{\pi}\int_{0}^{\infty}\frac{\sqrt{t}\,}{a^2+t}\frac{1}{(b+t)^{z+1}}\td t, \quad a\ge0,b>0,\Re(z)\ge0
\end{equation}
and call it generalized Catalan function of the second kind, while calling $C(a,b;z)$ in~\eqref{C(a,b;x)=dfn} generalized Catalan function of the first kind.
\par
We can study generalized Catalan function of the second kind $\mathcal{C}(a,b;z)$ as a function of three variables $a,b$, and $z$. It is easy to see that
\begin{align*}
\frac{\td^n\mathcal{C}(a,b;z)}{\td b^n}&=\frac{1}{\pi}\int_{0}^{\infty}\frac{\sqrt{t}\,}{a^2+t} \frac{\td^n}{\td b^n} \biggl[\frac{1}{(b+t)^{z+1}}\biggr]\td t\\
&=\frac{1}{\pi}\int_{0}^{\infty}\frac{\sqrt{t}\,}{a^2+t}\frac{\langle-(z+1)\rangle_n}{(b+t)^{z+n+1}}\td t\\
&=(-1)^n\frac{(z+1)_n}{\pi}\int_{0}^{\infty}\frac{\sqrt{t}\,}{a^2+t}\frac{1}{(b+t)^{z+n+1}}\td t,
\end{align*}
where the rising factorial $(z)_n$ is defined~\cite{CDM-68111.tex, Note-Elem-UPB-Qi-Zhang-Li.tex} by
\begin{equation*}
(z)_n=\prod_{\ell=0}^{n-1}(z+\ell)
=
\begin{cases}
z(z+1)\dotsm(z+n-1), & n\ge1\\
1, & n=0
\end{cases}
\end{equation*}
which is also known as the Pochhammer symbol or shifted factorial in the theory of special functions~\cite{NIST-HB-2010, Temme-96-book}. This means that generalized Catalan function of the second kind $\mathcal{C}(a,b;z)$ is a completely monotonic function~\cite{Qi-Agar-Surv-JIA.tex, Schilling-Song-Vondracek-2nd, widder} with respect to $b\in(0,\infty)$. Utilizing complete monotonicity~\cite{AIMS-Math-2019595.tex, Schilling-Song-Vondracek-2nd, widder}, we can derive many new analytic properties of generalized Catalan function of the second kind $\mathcal{C}(a,b;z)$.
\par
In a word, employing the integral representation~\eqref{C-n-a-b(x)=CQ-Eq}, we believe that we can discover some new properties of generalized Catalan function of the second kind $\mathcal{C}(a,b;z)$, of generalized Catalan numbers of the second kind $\mathcal{C}_n(a,b)$, and the Catalan numbers $C_n$. For the sake of length limit of this paper, we would not like to further study in details.

\subsection{Central binomial coefficients}

It is known that
\begin{equation}\label{central-binomial-GF}
\frac{1}{\sqrt{1-4x}\,}=\sum_{n=0}^{\infty}\binom{2n}{n}x^n
=1+ 2x+ 6x^2+ 20x^3+70x^4+ 252x^5+\dotsm,
\end{equation}
where $\binom{2n}{n}$ is called central binomial coefficient. It has been an attracting point for mathematicians to study central binomial coefficients. For example, we can rewritten~\cite[Lemma~3]{elliptic-mean-comparison-rev2.tex} as
\begin{equation*}
\sum_{k=0}^{n-1}\binom{2k}{k}\frac{1}{(k+1)4^{k}}=2\biggl[1-\frac{1}{4^{n}}\binom{2n}{n}\biggr]
\end{equation*}
and
\begin{equation*}
\sum_{k=0}^{n-1}\binom{2k}{k}\frac{4^{n-k}}{n-k}=2\binom{2n}{n}\sum_{k=1}^{n}\frac{1}{2k-1}.
\end{equation*}
For more information on results at this point, please refer to~\cite{Boyad-JIS-2012, John-Maxwell-Rocky-2019, Chen-HW-IJS-2016, Gar, Guo-Wang-Edinb-2020, Lehmer-Monthly-1985, Jovan-Mikic-2020, Qi-Chen-Lim-RNA.tex, Catalan-Int-Surv.tex, elliptic-mean-comparison-rev2.tex, Sprugnoli-INT-2006, Yin-Qi-20161028.tex} and closely related references therein.
\par
Combining~\eqref{central-binomial-GF} with~\eqref{C-n(a-b)-gen-funct} and~\eqref{C-n-a-b(x)=int-Eq} arrives at
\begin{align*}
\binom{2n}{n}&=\frac{1}{n!}\lim_{x\to0}\frac{\td^n}{\td x^n}\biggl(\frac{1}{\sqrt{1-4x}\,}\biggr)\\
&=\frac{1}{n!}\lim_{x\to0}\frac{\td^n}{\td x^n}\left(\frac12\lim_{\substack{a\to0^+\\ b\to1/4}}\frac1{a+\sqrt{b-x}\,}\right)\\
&=\frac{1}{n!}\frac12\lim_{\substack{a\to0^+\\ b\to1/4}}\lim_{x\to0}\frac{\td^n}{\td x^n}\biggl(\frac1{a+\sqrt{b-x}\,}\biggr)\\
&=\frac12\lim_{\substack{a\to0^+\\ b\to1/4}}\mathcal{C}_n(a,b)\\
&=\frac12\lim_{\substack{a\to0^+\\ b\to1/4}}\frac{1}{\pi}\int_{0}^{\infty}\frac{\sqrt{t}\,}{a^2+t}\frac{1}{(b+t)^{n+1}}\td t\\
&=\frac{1}{2\pi}\int_{0}^{\infty}\frac1{\sqrt{t}\,}\frac{1}{(1/4+t)^{n+1}}\td t\\
&=\frac{1}{\pi}\int_{0}^{\infty}\frac{1}{(1/4+s^2)^{n+1}}\td s\\
&=\frac{2^{2n+1}}{\pi}\int_{0}^{\infty}\frac{1}{(1+t^2)^{n+1}}\td t.
\end{align*}
The last three integral representations should provide effective tools for further studying central binomial coefficients $\binom{2n}{n}$.
\par
There are several extensions of central binomial coefficients $\binom{2n}{n}$ in the paper~\cite{DIGCBC-Wei.tex}.

\begin{rem}
This paper is a revised version of the preprint~\cite{Catalan-Gen-Int-Formula.tex} and a companion of the electronic preprint~\cite{Li-Qi-Authorea-2020, Catalan-Li-Qi-Kouba.tex}.
\end{rem}

\section{Declarations}

\begin{description}

\item[Acknowledgements]
The authors thank anonymous referees for their careful corrections to, valuable comments on, and helpful suggestions to the original version of this paper.

\item[Funding]
Not applicable.

\item[Availability of data and material]
Data sharing is not applicable to this article as no new data were created or analyzed in this study.

\item[Competing interests]
The authors declare that they have no conflict of competing interests.

\item[Authors' contributions]
All authors contributed equally to the manuscript and read and approved the final manuscript.
\end{description}

\end{document}